\documentclass{article}%
\usepackage{amsmath}
\usepackage{amsfonts}
\usepackage{amssymb}
\usepackage{graphicx}
\usepackage[colorlinks=true, pdfstartview=FitV, linkcolor=blue,citecolor=red,
urlcolor=blue,a4paper]{hyperref}
\usepackage{color}%
\providecommand{\U}[1]{\protect\rule{.1in}{.1in}}
\newtheorem{theorem}{Theorem}

\newtheorem{corollary}[theorem]{Corollary}

\newtheorem{remark}[theorem]{Remark}

\newenvironment{proof}[1][Proof]{\noindent\textbf{#1.} }{\ \rule{0.5em}{0.5em}}
\allowdisplaybreaks
\begin{document}

\title{A New Expression for the Bernoulli Numbers and Its Applications}
\author{Levent Karg\i n and Merve Mutluer\thanks{Corresponding author}\\{\small Department of Mathematics, Akdeniz University, 07058-Antalya,
T\"{u}rkiye}\\{\small lkargin@akdeniz.edu.tr and mervemutluer34@gmail.com}}
\date{}
\maketitle

\begin{abstract}
This paper shows that a finite discrete convolution involving Stirling numbers
of both kinds and harmonic numbers can be expressed in terms of the Bernoulli
numbers. As applications of this expression, the linear recurrence relation
for the Bernoulli numbers given by Agoh is reproved, and a new recurrence
relation for the Bernoulli numbers is obtained. Furthermore, it is shown that
a cumulative sum of the Bernoulli numbers can be written in terms of the
Bernoulli and di-Bernoulli numbers. Finally, congruences for the sums of the
Bernoulli and Euler numbers are established.

\textbf{Keywords:} Bernoulli numbers, di-Bernoulli numbers, Euler numbers,
harmonic numbers, Stirling numbers, congruences.

\textbf{MSC: }11A07, 11B68, 11B73.

\end{abstract}

\section{Introduction}

\setcounter{theorem}{0} \setcounter{equation}{0}

The Bernoulli numbers, denoted by $B_{n}$, first arose in the context of
computing sums of powers of positive integers in closed form, a topic that has
been studied since antiquity. They may be defined by means of the generating
function
\[
\frac{t}{e^{t}-1}=\sum_{n=0}^{\infty}B_{n}\frac{t^{n}}{n!},\text{\ \ \ \ }%
|t|<2\pi.
\]
It is easy to find the values $B_{0}=1$, $B_{1}=-1/2$, $B_{2}=1/6$, $B_{3}=0$,
$B_{4}=-1/30$, and $B_{2n+1}=0$ for all $n\geq1$. They also appear in the
Taylor series expansions of the tangent and hyperbolic tangent functions, the
Euler--Maclaurin summation formula, and certain representations of the Riemann
zeta function. The Bernoulli numbers have consequently become one of the
cornerstones of mathematical science, with numerous approaches available for
their introduction, including generating functions, recurrence relations,
integral representations, and explicit formulas. Among the numerous known
explicit formulas, one of the most famous is Worpitzky's identity%

\begin{equation}
\sum_{k=1}^{n}\left(  -1\right)  ^{k}{n\brace k} \frac{k!}{k+1}=B_{n},
\label{5}%
\end{equation}
which was later generalized to
\begin{equation}
\sum_{k=j}^{n}\frac{\left(  -1\right)  ^{k-j}}{k}{n\brace k} {k\brack j}
=\binom{n-1}{j}\frac{B_{n-j}}{n-j}, \label{4}%
\end{equation}
where $1\leq j\leq n$ (cf. \cite[p.217]{QG}). Here $%
\genfrac{[}{]}{0pt}{0}{n}{k}%
$ and $%
\genfrac{\{}{\}}{0pt}{0}{n}{k}%
$ denote the Stirling numbers of the first and second kind, defined by
\begin{equation}
\left(  x\right)  ^{\bar{n}}=x\left(  x+1\right)  \cdots\left(  x+n-1\right)
=\sum_{k=0}^{n}{n\brack k} x^{k} \label{1tss}%
\end{equation}
and
\[
x^{n}=\sum_{k=0}^{n}\left(  -1\right)  ^{n-k}{n\brace k} \left(  x\right)
^{\bar{k}},
\]
respectively (see, for example \cite[Chapters 9 and 12]{QG}), and $\dbinom
{n}{j}$ is the binomial coefficient.

In 2010, Mez\H{o} \cite{M2010} extended the identity (\ref{4}) and derived a
formula for the Bernoulli polynomials by using the $r$-Whitney numbers.
Motivated by this approach, Merca further showed that a finite discrete
convolution involving different generalizations of the Stirling numbers of
both kinds can be expressed in terms of the Bernoulli polynomials
\cite{Merca2014, Merca2015b}. In a recent paper \cite{AK}, new explicit
expressions for the Bernoulli numbers, namely,%
\begin{align}
\sum_{k=1}^{n}\left(  -1\right)  ^{k-1}%
\genfrac{\{}{\}}{0pt}{0}{n}{k}%
\left(  k-1\right)  !H_{k}  &  =B_{n-1},\label{6}\\
\sum_{k=2}^{n}\left(  -1\right)  ^{k}%
\genfrac{\{}{\}}{0pt}{0}{n}{k}%
\left(  k-1\right)  !H_{k-1}H_{k}  &  =\frac{n+1}{2}B_{n-2}, \label{7}%
\end{align}
were introduced. Here
\[
H_{n}=1+\frac{1}{2}+\frac{1}{3}+\cdots+\frac{1}{n}
\]
is the $n$th harmonic number. The identities
\begin{equation}
\left(  k-1\right)  !=%
\genfrac{[}{]}{0pt}{0}{k}{1}%
\text{ and }\left(  k-1\right)  !H_{k-1}=%
\genfrac{[}{]}{0pt}{0}{k}{2}
\label{20}%
\end{equation}
motivate us to examine the closed form of the sum
\[
\sum_{k=j}^{n}\left(  -1\right)  ^{k-j}%
\genfrac{\{}{\}}{0pt}{0}{n}{k}%
\genfrac{[}{]}{0pt}{0}{k}{j}%
H_{k}.
\]
As stated in the following theorem, this sum gives rise to a new formula for
the Bernoulli numbers.

\begin{theorem}
[Main Theorem]\label{TEO}For integers $n$ and $j$ with $0\leq j\leq n$, we
have%
\begin{equation}
\sum_{k=j}^{n}\left(  -1\right)  ^{k-j}%
\genfrac{\{}{\}}{0pt}{0}{n}{k}%
\genfrac{[}{]}{0pt}{0}{k}{j}%
H_{k}=\left(  \dbinom{n}{j}-1\right)  \frac{B_{n-j}}{n-j}. \label{1}%
\end{equation}

\end{theorem}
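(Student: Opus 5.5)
The approach is bivariate exponential generating functions. Fix $j$ and encode the inner sum through
\[
\sum_{j\ge0}{k\brack j}z^{j}=(z)^{\bar k},\qquad \sum_{n\ge k}{n\brace k}\frac{t^{n}}{n!}=\frac{(e^{t}-1)^{k}}{k!}.
\]
Multiplying the left side of (\ref{1}) by $(-1)^j z^{j}t^{n}/n!$ and summing over $n,j$ turns it into
\[
\Phi(t,z)=\sum_{k\ge0}(-z)^{\bar k}H_{k}\frac{x^{k}}{k!}\quad\text{at } x=1-e^{t}.
\]
The plan is to compute $\Phi$ in closed form, expand it in $z$ and $t$, and read off the coefficient of $z^{j}t^{n}/n!$.

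\textbf{Step 1: remove the harmonic number.} Write $H_{k}=\int_{0}^{1}\frac{1-(1-u)^{k}}{u}\,du$. Together with $\sum_{k}(-z)^{\bar k}y^{k}/k!=(1-y)^{z}$, this gives
\[
\Phi=\int_{0}^{1}\frac{(1-x)^{z}-(1-x+xu)^{z}}{u}\,du .
\]
With $1-x=e^{t}$, substitute $w=e^{t}+u(1-e^{t})$ and then $w=e^{s}$. The coefficient of $z^{j}$ becomes
\[
\frac{1}{j!}\int_{0}^{t}\frac{t^{j}-s^{j}}{e^{t-s}-1}\,ds=\frac{1}{j!}\int_{0}^{t}\frac{t^{j}-(t-v)^{j}}{e^{v}-1}\,dv .
\]
The sign bookkeeping from $(-1)^{k-j}$ must be tracked carefully at this stage; I expect it to be absorbed exactly by the evaluation at $x=1-e^{t}$.

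\textbf{Step 2: bring in the Bernoulli numbers.} Expand $t^{j}-(t-v)^{j}=-\sum_{i=1}^{j}\binom{j}{i}(-v)^{i}t^{j-i}$. In each term use $v^{i}/(e^{v}-1)=v^{i-1}\sum_{m}B_{m}v^{m}/m!$ and integrate termwise from $0$ to $t$. This expresses the coefficient of $z^{j}$ as
\[
\frac{1}{j!}\sum_{m\ge0}\frac{B_{m}}{m!}\,t^{m+j}\sum_{i=1}^{j}\binom{j}{i}\frac{(-1)^{i+1}}{m+i}.
\]
Every power is $t^{m+j}$, so the coefficient of $t^{n}/n!$ comes from the single value $m=n-j$.

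\textbf{Step 3: the main obstacle.} It remains to evaluate the finite sum $S=\sum_{i=1}^{j}\binom{j}{i}\frac{(-1)^{i+1}}{m+i}$ with $m=n-j$. Add and subtract the $i=0$ term and use the beta integral $\sum_{i=0}^{j}\binom{j}{i}\frac{(-1)^{i}}{m+i}=\int_{0}^{1}y^{m-1}(1-y)^{j}\,dy=\frac{1}{m\binom{m+j}{j}}$. This gives $S=\frac1m-\frac{1}{m\binom{n}{j}}$, and hence
\[
\frac{n!}{j!\,m!}\,B_{m}\,S=\binom{n}{j}\frac{B_{n-j}}{n-j}\Bigl(1-\binom{n}{j}^{-1}\Bigr)=\Bigl(\binom{n}{j}-1\Bigr)\frac{B_{n-j}}{n-j},
\]
which is (\ref{1}).

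The degenerate cases are handled directly. When $j=0$ or $j=n$ the factor $\binom{n}{j}-1$ vanishes, with the convention that the right side is $0$ when $n=j$. In these cases the left side is checked to be $0$: for $j=0$ only $k=0$ contributes and $H_{0}=0$, and for $j=n$ the single term is $H_{n}\cdot(-1)^0\cdot 1$, which must be matched against the convention adopted for $B_0/0$. This last check needs attention, and the range of the theorem may implicitly require $j<n$ there. The main risk in the argument lies in Steps 1 and 3, namely in keeping the signs consistent and justifying the termwise interchange of sums and integrals as formal power series.
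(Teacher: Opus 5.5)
Your proof is correct, and it takes a genuinely different route from the paper's. The paper's proof is entirely finite. It applies the triangular recurrences for both kinds of Stirling numbers, together with $H_{k+1}=H_k+\frac{1}{k+1}$, to get the reduction $\mathcal{B}_{n+1,j}=\mathcal{B}_{n,j-1}+\binom{n}{j}\frac{B_{n+1-j}}{n+1-j}$. The Bernoulli numbers enter there only through the known identity (\ref{4}), used in a shifted form. It then telescopes in $j$ down to $\mathcal{B}_{\cdot,0}=0$ and sums $\sum_{k=0}^{j-1}\binom{n-k}{j-k}=\binom{n+1}{j}-1$ with the hockey-stick identity.

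You instead evaluate $\Phi(t,z)=\sum_{k}\binom{z}{k}H_k(e^t-1)^k=\int_0^t\frac{e^{tz}-e^{(t-v)z}}{e^v-1}\,dv$ in closed form. The Bernoulli numbers then come directly from $\frac{v}{e^v-1}$, and no auxiliary identity is needed. The Beta integral also makes the origin of $\binom{n}{j}-1$ transparent: $\frac1m$ produces $\binom{n}{j}\frac{B_{n-j}}{n-j}$, and $-\frac{1}{m\binom{n}{j}}$ produces $-\frac{B_{n-j}}{n-j}$. A bonus is that $\Phi(t,x)=\sum_{n}{}_Hw_n^{(x)}\frac{t^n}{n!}$, so you obtain the generating function of the polynomials in (\ref{9}) for all $x$ at once. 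At $x=m$ and $x=-1/2$ it reproduces the two evaluations that the paper derives from external identities in its corollaries. The price is analytic bookkeeping, which is routine. The interchange in Step 1 follows from dominated convergence for $|e^t-1|<1$, since $0\le\frac{1-(1-u)^k}{u}\le k$, and Step 2 is valid for $|t|<2\pi$.

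There are two small corrections.

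\emph{The multiplier.} It should be $z^j t^n/n!$, without the $(-1)^j$. Since $\sum_j(-1)^{k-j}{k\brack j}z^j=k!\binom{z}{k}=(-1)^k(-z)^{\bar k}$, combining this with $(e^t-1)^k$ gives exactly your $\Phi$ at $x=1-e^t$. The extra $(-1)^j$ would instead turn $(-z)^{\bar k}$ into $(z)^{\bar k}$. Everything afterwards uses $\Phi$ as you wrote it, so nothing downstream changes.

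\emph{The case $j=n$.} Here the left side is $H_n$, not $0$. Your own Step 2 shows this: for $m=0$ one gets $S=\sum_{i=1}^{j}\binom{j}{i}\frac{(-1)^{i+1}}{i}=H_j$. The right side, however, is the undefined expression $0\cdot B_0/0$. So no convention that sets the right side to $0$ rescues $j=n\ge 1$, and the identity should be read for $0\le j\le n-1$. That is exactly the range your Step 3 covers: the Beta integral needs $m\ge1$, and $j=0$ gives $S=0$ automatically. It is also all that the paper's proof covers, since its reduction formula requires $n+1-j\ge 1$. Consistently with this, the paper's (\ref{9}) splits off the term $H_nx^n$.
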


We note that for $j=1$ and $j=2$, the formula (\ref{1}) reduces to (\ref{6})
and (\ref{7}), respectively. Moreover, using the following known relation
\[%
\genfrac{[}{]}{0pt}{0}{k}{3}%
=\frac{1}{2!}\left(  k-1\right)  !\left[  \left(  H_{k-1}\right)  ^{2}%
-H_{k-1}^{\left(  2\right)  }\right]  ,
\]
where
\[
H_{n}^{\left(  m\right)  }=1+\frac{1}{2^{m}}+\cdots+\frac{1}{n^{m}}
\]
is the generalized harmonic number, we reach at the following special case of
(\ref{1}):%
\[
\sum_{k=3}^{n}\left(  -1\right)  ^{k-1}%
\genfrac{\{}{\}}{0pt}{0}{n}{k}%
\left(  k-1\right)  !\left[  \left(  H_{k-1}\right)  ^{2}-H_{k-1}^{\left(
2\right)  }\right]  H_{k}=\frac{n^{2}+2}{3}B_{n-3}.
\]
As applications of the main theorem, we reprove the linear recurrence relation
for the Bernoulli numbers given by Agoh \cite{AGOH2016} and derive a new
recurrence relation for the Bernoulli numbers. Moreover, we introduce a closed
formula for cumulative sum of the Bernoulli numbers. This formula leads us to
deduce some congruences for the sums of Bernoulli and Euler numbers modulo an
odd prime and its square. These are presented in the next section. The proof
of Theorem \ref{TEO} is given in the final section.

\section{Applications of the Main Theorem}

\setcounter{theorem}{0} \setcounter{equation}{0}

Multiplying both sides of (\ref{1}) with $x^{j}$ and summing over $j$ from $1$
to $n$ and using (\ref{1tss}) we obtain
\begin{equation}
\sum_{j=1}^{n}\left(  \dbinom{n}{j}-1\right)  \frac{B_{j}}{j}x^{n-j}=\text{
}_{H}w_{n}^{\left(  x\right)  }-H_{n}x^{n}, \label{9}%
\end{equation}
where $n$ is a positive integer, $x$ is a non-zero complex variable and%
\begin{equation}
_{H}w_{n}^{\left(  x\right)  }=\sum_{k=1}^{n}%
\genfrac{\{}{\}}{0pt}{0}{n}{k}%
\dbinom{x}{k}k!H_{k}. \label{17}%
\end{equation}

In this section, we list some special cases of (\ref{9}). First we have the
following linear identity which was also proved by a different method in
\cite{AGOH2016}.

\begin{corollary}
\label{cor1}For any positive integers $n$ and $m$, we have
\begin{equation}
\sum_{j=1}^{n}\left(  \dbinom{n}{j}-1\right)  \frac{B_{j}}{j}m^{n-j}%
=m^{n}\left(  H_{m}-H_{n}\right)  -\sum_{j=1}^{m}\frac{\left(  m-j\right)
^{n}}{j}. \label{12}%
\end{equation}

\end{corollary}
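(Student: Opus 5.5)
We specialize (\ref{9}) at $x=m$, a positive integer, and evaluate $_{H}w_{n}^{(m)}$ in closed form. With $x=m$ the left side of (\ref{9}) is exactly the left side of (\ref{12}). The right side is $_{H}w_{n}^{(m)}-H_{n}m^{n}$. It therefore suffices to show
\[
_{H}w_{n}^{(m)}=\sum_{k=1}^{n}{n\brace k}\binom{m}{k}k!H_{k}=m^{n}H_{m}-\sum_{j=1}^{m}\frac{(m-j)^{n}}{j}.
\]

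The plan is to write $H_k=\sum_{j=1}^{k}\frac1j$ and exchange the order of summation. The factor $\binom{m}{k}$ vanishes for $k>m$, so $k$ effectively runs up to $\min(n,m)$. Hence
\[
_{H}w_{n}^{(m)}=\sum_{j=1}^{m}\frac1j\sum_{k\ge j}{n\brace k}\binom{m}{k}k!.
\]
The inner sum over all $k\ge0$ is the classical identity $\sum_k{n\brace k}\binom{m}{k}k!=m^n$, which counts maps from an $n$-set to an $m$-set by the partition of the domain into fibers. So the inner sum equals $m^n-\sum_{k=0}^{j-1}{n\brace k}\binom{m}{k}k!$.

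The main step is to identify the remaining partial sum. A cleaner route avoids partial sums altogether by using $H_k=\sum_{i=1}^{k}\frac{1}{i}$ in the form
\[
\binom{m}{k}H_k=\sum_{i=1}^{m}\frac1i\left[\binom{m}{k}-\binom{m-i}{k}\right].
\]
This is the standard identity $\sum_{i=1}^{m}\frac{1}{i}\bigl(\binom{m}{k}-\binom{m-i}{k}\bigr)=\binom{m}{k}H_k$. It follows from $\sum_{i=1}^m\frac1i\binom{m-i}{k}=\binom{m}{k}(H_m-H_k)$, which can be proved by induction on $m$ using Pascal's rule, or via the generating function $\sum_m\binom{m}{k}H_m z^m$. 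Substituting this and applying the counting identity to both $m$ and $m-i$ gives
\[
\sum_{i=1}^m\frac1i\left(m^n-(m-i)^n\right)=m^nH_m-\sum_{i=1}^m\frac{(m-i)^n}{i}.
\]
This is the desired closed form.

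Verifying the binomial-harmonic identity $\sum_{i=1}^{m}\frac1i\binom{m-i}{k}=\binom{m}{k}(H_m-H_k)$ is the only real obstacle. I would prove it by induction on $m$. Write $\binom{m+1-i}{k}=\binom{m-i}{k}+\binom{m-i}{k-1}$, and handle the $k-1$ term through the induction hypothesis on $k$. Then check that $\binom{m+1}{k}(H_{m+1}-H_k)$ satisfies the same recurrence, using $\binom{m+1}{k}\frac{1}{m+1}=\binom{m}{k-1}\frac1k$. The boundary cases $k=0$, where $\sum_{i=1}^m\frac{1}{i}=H_m$ holds directly, and $k>m$, where both sides vanish, are immediate.
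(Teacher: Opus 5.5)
Your proof is correct. It differs from the paper's only in how ${}_{H}w_{n}^{(m)}$ is evaluated.

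The paper passes to exponential generating functions. It uses (\ref{2sgf}) to get $\sum_{n\ge1}{}_{H}w_{n}^{(m)}t^{n}/n!=\sum_{k=1}^{m}\binom{m}{k}H_{k}(e^{t}-1)^{k}$, then quotes the polynomial identity (\ref{11}) from Agoh at $z=e^{t}$. You stay with finite sums: you expand $\binom{m}{k}H_{k}$ by your binomial--harmonic lemma and collapse the Stirling sums with $\sum_{k}\genfrac{\{}{\}}{0pt}{}{n}{k}k!\binom{x}{k}=x^{n}$.

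Underneath, these are the same computation. Writing $z^{m-i}=\sum_{k}\binom{m-i}{k}(z-1)^{k}$ shows that (\ref{11}), read coefficient by coefficient in powers of $z-1$, is exactly your identity $\binom{m}{k}H_{k}=\sum_{i=1}^{m}\frac{1}{i}\bigl[\binom{m}{k}-\binom{m-i}{k}\bigr]$. Likewise, substituting $z=e^{t}$ and extracting $t^{n}/n!$ is the generating-function form of your counting identity, since $\sum_{k}\binom{x}{k}(e^{t}-1)^{k}=e^{xt}$.

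What your version buys is self-containment: you prove the key lemma, which the paper only cites, and you avoid formal power series. The cost is a slightly longer argument.

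Your induction sketch is sound. For $k\ge1$, the sum $S(m,k)=\sum_{i=1}^{m}\frac{1}{i}\binom{m-i}{k}$ satisfies $S(m+1,k)=S(m,k)+S(m,k-1)$, because the $i=m+1$ term is $\frac{1}{m+1}\binom{0}{k}=0$. The expression $\binom{m}{k}(H_{m}-H_{k})$ obeys the same recurrence, thanks to $\frac{1}{m+1}\binom{m+1}{k}=\frac{1}{k}\binom{m}{k-1}$. So only the level-$m$ hypothesis (at index $k-1$) is needed, not a separate induction on $k$.

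In the write-up you can simply delete the abandoned partial-sum paragraph.
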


\begin{proof}
$x=m\geq1$ in (\ref{9}) gives
\begin{equation}
\sum_{j=1}^{n}\left(  \dbinom{n}{j}-1\right)  \frac{B_{j}}{j}m^{n-j}=\text{
}_{H}w_{n}^{\left(  m\right)  }-H_{n}m^{n}. \label{10}%
\end{equation}
To examine $_{H}w_{n}^{\left(  m\right)  }$, we first use the generating
function of the Stirling numbers of the second kind%
\begin{equation}
\sum_{n=k}^{n}%
\genfrac{\{}{\}}{0pt}{0}{n}{k}%
\frac{t^{n}}{n!}=\frac{\left(  e^{t}-1\right)  ^{k}}{k!}, \label{2sgf}%
\end{equation}
and obtain%
\[
\sum_{n=1}^{\infty}\text{ }_{H}w_{n}^{\left(  m\right)  }\frac{t^{n}}{n!}%
=\sum_{k=1}^{m}\dbinom{m}{k}H_{k}\left(  e^{t}-1\right)  ^{k}.
\]
The equality
\begin{equation}
\sum_{k=1}^{m}\dbinom{m}{k}H_{k}\left(  z-1\right)  ^{k}=H_{m}z^{m}-\sum
_{k=0}^{m-1}\frac{z^{k}}{m-k} \label{11}%
\end{equation}
(\cite[Eq. (2.4, iii)]{Agoh2021}) gives%
\[
\sum_{n=1}^{\infty}\text{ }_{H}w_{n}^{\left(  m\right)  }\frac{t^{n}}%
{n!}=H_{m}\sum_{n=0}^{\infty}m^{n}\frac{t^{n}}{n!}-\sum_{n=0}^{\infty}\left(
\sum_{k=0}^{m-1}\frac{k^{n}}{m-k}\right)  \frac{t^{n}}{n!}.
\]
Comparing the coefficients of $t^{n}/n!$ yields to%
\[
_{H}w_{n}^{\left(  m\right)  }=H_{m}m^{n}-\sum_{j=1}^{m}\frac{\left(
m-j\right)  ^{n}}{j}.
\]
Inserting this equation in (\ref{10}) completes the proof of (\ref{12}).
\end{proof}

Note that (\ref{12}) can also be written as
\[
\sum_{j=1}^{n}\left(  -1\right)  ^{j}\left(  \dbinom{n}{j}-1\right)
\frac{B_{j}}{j}m^{n-j}=m^{n}\left(  H_{m}-H_{n}+\frac{\left(  n-1\right)  }%
{m}\right)  -\sum_{j=1}^{m}\frac{\left(  m-j\right)  ^{n}}{j},
\]
since $\left(  -1\right)  ^{n}B_{n}=B_{n}$ for $n\geq2$. In particular, we
have
\[
\sum_{j=1}^{n}\left(  -1\right)  ^{j}\left(  \dbinom{n}{j}-1\right)
\frac{B_{j}}{j}=n-H_{n}.
\]
Moreover, If we set $m=1$ and $m=2$ in (\ref{12}) and combine the resulting
formulas, we arrive at%
\[
\sum_{j=1}^{n}\left(  \dbinom{n}{j}-1\right)  \frac{B_{j}}{j}\left(
1-2^{-j}\right)  =\frac{1-2^{n-1}}{2^{n}}.
\]
A slightly different form of the above equation can serve as a recurrence
relation for the Bernoulli numbers.

\begin{corollary}
For a positive integer $n$, we have
\begin{equation}
\sum_{j=1}^{n}\left(  \dbinom{n}{j}+1\right)  \frac{B_{j}}{j}\left(
1-2^{j}\right)  =1. \label{16}%
\end{equation}

\end{corollary}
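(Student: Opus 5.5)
The identity can be proved directly by induction on $n$, using only classical facts about Bernoulli polynomials; it does not need Theorem~\ref{TEO} or (\ref{9}). Write $a_j=(1-2^{j})B_j/j$ and $S_n=\sum_{j=1}^{n}\left(\binom{n}{j}+1\right)a_j$. The claim is that $S_n=1$ for every $n\geq1$. The plan is to check $n=1$ and then show that $S_n-S_{n-1}=0$ for all $n\geq2$.

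\textbf{Base case.} For $n=1$ we have $S_1=2a_1=2\cdot(-1)\cdot(-\tfrac12)=1$.

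\textbf{Telescoping step.} For $n\geq2$, Pascal's rule gives
\[
S_n-S_{n-1}=\sum_{j=1}^{n}\binom{n-1}{j-1}a_j+a_n .
\]
Since $\binom{n-1}{j-1}/j=\binom{n}{j}/n$, multiplying by $n$ shows that $S_n-S_{n-1}=0$ is equivalent to
\[
\sum_{j=0}^{n}\binom{n}{j}\left(1-2^{j}\right)B_j=-\left(1-2^{n}\right)B_n .
\]
The $j=0$ term vanishes, so including it is harmless.

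\textbf{Evaluating the two binomial sums.} Use $B_n(x)=\sum_{j}\binom{n}{j}B_jx^{n-j}$, so that $\sum_j\binom{n}{j}B_j y^{j}=y^{n}B_n(1/y)$.
\begin{itemize}
\item With $y=1$ this gives $\sum_j\binom{n}{j}B_j=B_n(1)=B_n$ for $n\geq2$.
\item With $y=2$ it gives $\sum_j\binom{n}{j}2^{j}B_j=2^{n}B_n(1/2)$. The known value $B_n(1/2)=(2^{1-n}-1)B_n$ then yields $2-2^{n}$ times $B_n$.
\end{itemize}
Hence the left side equals $B_n-(2-2^{n})B_n=(2^{n}-1)B_n$, which is exactly the required right side.

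Both evaluations also follow from generating functions. The first is $\frac{te^{t}}{e^{t}-1}=\frac{t}{e^{t}-1}+t$. For the second, note that
\[
\frac{t}{e^{t}-1}-\frac{2t}{e^{2t}-1}=\frac{t}{e^{t}+1};
\]
multiplying by $e^{t}$ and comparing coefficients gives the combined sum directly. This route may be preferable because it avoids quoting $B_n(1/2)$.

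The main obstacle is bookkeeping rather than depth. The restriction $n\geq2$ matters because $B_n(1)\neq B_n$ when $n=1$. For this reason the base case must be handled separately and the telescoping step applied only for $n\geq2$. One could instead try to derive (\ref{16}) from the $m=1,2$ cases of (\ref{12}), as the preceding remark suggests. However, converting the factor $2^{-j}$ into $2^{j}$ would require $x=1/2$ in (\ref{9}), which looks harder than the direct argument above.
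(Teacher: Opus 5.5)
Your proof is correct, and it takes a genuinely different route from the paper's.

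The paper obtains (\ref{16}) as an application of the main theorem. It sets $x=-1/2$ in (\ref{9}); your guess about the intended route is therefore essentially right, and the sign of $x$ is harmless because $(-1)^jB_j=B_j$ for $j\geq2$. It then evaluates ${}_Hw_n^{(-1/2)}$ using the generating function of $\binom{2k}{k}H_k$ together with $\binom{-1/2}{k}=(-1)^k2^{-2k}\binom{2k}{k}$, which gives $(-1)^n2^n\,{}_Hw_n^{(-1/2)}=\sum_{k=0}^{n-1}\binom{n}{k+1}E_k(1)$. Finally it converts Euler values into Bernoulli numbers via (\ref{15}) and rearranges.

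You bypass Theorem~\ref{TEO}, harmonic numbers and Euler polynomials altogether. Your differencing shows that, given the base case $n=1$, the identity is equivalent to the classical relation $\sum_{j=0}^{n}\binom{n}{j}(1-2^{j})B_j=(2^{n}-1)B_n$ for $n\geq2$. That relation is just the coefficient form of $e^{t}\cdot\frac{t}{e^{t}+1}=t-\frac{t}{e^{t}+1}$.

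What your argument gains is brevity, self-containment, and explicit handling of the one boundary exception, $B_1(1)\neq B_1$. In the paper, the corresponding exception $E_0(1)=1\neq-E_0(0)$ is not flagged in (\ref{15}). It is accounted for only implicitly, by the $2n$ term in the display that follows.

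What the paper's argument gains is a closed form for ${}_Hw_n^{(-1/2)}$ in terms of Euler polynomials, and it places (\ref{16}) inside the framework of (\ref{9}), as its title of ``application'' requires. Conversely, your proof makes clear that (\ref{16}) follows from standard Bernoulli-polynomial facts alone. So (\ref{16}) does not genuinely depend on Theorem~\ref{TEO}.
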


\begin{proof}
We first set $x=-1/2$ in (\ref{9}) to obtain
\[
\sum_{j=1}^{n}\left(  -1\right)  ^{j}\left(  \dbinom{n}{j}-1\right)
\frac{B_{j}}{j}2^{j}=\left(  -1\right)  ^{n}2^{n}\text{ }_{H}w_{n}^{\left(
-1/2\right)  }-H_{n}.
\]
Since
\[
\sum_{k=1}^{\infty}\binom{2k}{k}H_{k}t^{k}=\frac{2}{\sqrt{1-4t}}\ln\left(
\frac{1+\sqrt{1-4t}}{2\sqrt{1-4t}}\right)  ,
\]
(\cite[Theorem 1, Eq. (4)]{B}) and
\[
\binom{-1/2}{k}=\frac{\left(  -1\right)  ^{k}}{2^{2k}}\dbinom{2k}{k},
\]
we obtain that
\[
\sum_{n=0}^{\infty}\text{ }_{H}w_{n}^{\left(  -1/2\right)  }\frac{\left(
-2t\right)  ^{n}}{n!}=2e^{t}\ln\left(  \frac{e^{t}+1}{2}\right)  .
\]
From the generating function of Euler polynomials \cite[Section 15.3]{QG}
\[
\sum_{n=0}^{\infty}E_{n}\left(  x\right)  \frac{t^{n}}{n!}=\frac{2}{e^{t}%
+1}e^{xt},
\]
we deduce that%
\[
\sum_{n=1}^{\infty}E_{n-1}\left(  1\right)  \frac{t^{n}}{n!}=2\ln\left(
\frac{e^{t}+1}{2}\right)  .
\]
Hence, we reach at
\[
\left(  -1\right)  ^{n}2^{n}\text{ }_{H}w_{n}^{\left(  -1/2\right)  }%
=\sum_{k=0}^{n-1}\dbinom{n}{k+1}E_{k}\left(  1\right)  .
\]
We then use
\begin{equation}
E_{k}\left(  1\right)  =-E_{k}\left(  0\right)  =2\left(  2^{k+1}-1\right)
\frac{B_{k+1}}{k+1} \label{15}%
\end{equation}
(\cite[Section 15.3]{QG}) to obtain
\[
\sum_{j=1}^{n}\left(  -1\right)  ^{j}\left(  \dbinom{n}{j}-1\right)
\frac{B_{j}}{j}2^{j}-2\sum_{j=1}^{n}\dbinom{n}{j}\frac{B_{j}}{j}\left(
2^{j}-1\right)  =2n-H_{n}.
\]
Since $\left(  -1\right)  ^{n}B_{n}=B_{n}$ for $n\geq2,$ we arrive at
\[
\sum_{j=1}^{n}\dbinom{n}{j}\frac{B_{j}}{j}\left(  2^{j-1}-1\right)
+\sum_{j=1}^{n}\frac{B_{j}}{j}2^{j-1}=\frac{1}{2}H_{n}-1.
\]
(\ref{16}) now follows after some elementary manipulations.
\end{proof}

Before presenting the next result, we recall the poly-Bernoulli polynomials.
The poly-Bernoulli polynomials are defined via the generating function
\begin{equation}
\sum_{n=0}^{\infty}\mathbb{B}_{n}^{\left(  p\right)  }\left(  x\right)
\frac{t^{n}}{n!}=\frac{\mathrm{Li}_{p}\left(  1-e^{-t}\right)  }{1-e^{-t}%
}e^{xt},\label{gf-pB}%
\end{equation}
(\cite{Bayad}), where $\mathrm{Li}_{p}\left(  z\right)  =\sum_{n=1}^{\infty
}z^{n}/n^{p},$ is the polylogarithm function. In particular, $\mathbb{B}%
_{n}^{\left(  p\right)  }\left(  0\right)  =\mathbb{B}_{n}^{\left(  p\right)
}$ is the $n$th poly-Bernoulli number and when $p=2,$ $\mathbb{B}_{n}^{\left(
2\right)  }$ is called as di-Bernoulli number (\cite{Kaneko}).

We now state a cumulative sum of the Bernoulli numbers in terms of the
Bernoulli numbers, di-Bernoulli numbers, and the values of the di-Bernoulli
polynomials at $x=1$.

\begin{corollary}
For an integer $n\geq2$, we have
\[
\sum_{j=0}^{n}B_{j}=\mathbb{B}_{n}^{\left(  2\right)  }\left(  1\right)
+B_{n}-\mathbb{B}_{n}^{\left(  2\right)  }-1.
\]

\end{corollary}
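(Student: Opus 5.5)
The plan is to specialize (\ref{9}) by a differentiation in the variable $x$ instead of at a single point. This turns the weights $B_{j}/j$ into $B_{j}$, and the resulting polynomial identity is then evaluated at $x=1$. Set $y=1/x$ and multiply (\ref{9}) by $y^{n}$ to get the polynomial identity
\[
\sum_{j=1}^{n}\left(\dbinom{n}{j}-1\right)\frac{B_{j}}{j}y^{j}=y^{n}\,{}_{H}w_{n}^{\left(1/y\right)}-H_{n},
\]
which holds for all nonzero $y$, and hence identically, since ${}_{H}w_{n}^{(x)}$ is a polynomial in $x$ of degree $n$. Differentiating with respect to $y$ and putting $y=1$ gives
\[
\sum_{j=1}^{n}\left(\dbinom{n}{j}-1\right)B_{j}=n\,{}_{H}w_{n}^{(1)}-\frac{d}{dx}\,{}_{H}w_{n}^{(x)}\Big|_{x=1}.
\]
From the proof of Corollary \ref{cor1} with $m=1$ we have ${}_{H}w_{n}^{(1)}=1$. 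For $n\geq2$, $\sum_{j=0}^{n}\binom{n}{j}B_{j}=B_{n}(1)=B_{n}$. Together these give
\[
\sum_{j=0}^{n}B_{j}=B_{n}-n+W_{n},\qquad W_{n}:=\frac{d}{dx}\,{}_{H}w_{n}^{(x)}\Big|_{x=1}.
\]
It therefore remains to show $W_{n}-n=\mathbb{B}_{n}^{(2)}(1)-\mathbb{B}_{n}^{(2)}-1$.

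By (\ref{gf-pB}), the exponential generating function of $\mathbb{B}_{n}^{(2)}(1)-\mathbb{B}_{n}^{(2)}$ is
\[
\frac{\mathrm{Li}_{2}(1-e^{-t})}{1-e^{-t}}\,(e^{t}-1)=e^{t}\,\mathrm{Li}_{2}(1-e^{-t}).
\]
The generating function of $-1-n$ for $n\ge 1$ is $-(e^{t}-1)-te^{t}$, so the target is
\[
\sum_{n\ge1}W_{n}\frac{t^{n}}{n!}=e^{t}\,\mathrm{Li}_{2}(1-e^{-t})+1-e^{t}+te^{t},
\]
up to a check of the constant term ($W_{0}=0$). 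To compute the left side I use (\ref{2sgf}) exactly as in Corollary \ref{cor1}, with $u=e^{t}-1$:
\[
\sum_{n}{}_{H}w_{n}^{(x)}\frac{t^{n}}{n!}=\sum_{k\ge1}\binom{x}{k}H_{k}u^{k}.
\]
I then write $H_{k}=\int_{0}^{1}\frac{1-s^{k}}{1-s}\,ds$ and sum the binomial series:
\[
\sum_{k\ge1}\binom{x}{k}H_{k}u^{k}=\int_{0}^{1}\frac{(1+u)^{x}-(1+us)^{x}}{1-s}\,ds.
\]
Differentiating in $x$ at $x=1$ gives
\[
\sum_{n}W_{n}\frac{t^{n}}{n!}=\int_{0}^{1}\frac{(1+u)\ln(1+u)-(1+us)\ln(1+us)}{1-s}\,ds,\qquad 1+u=e^{t}.
\]

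The main obstacle is evaluating this integral in closed form and matching it with $e^{t}\mathrm{Li}_{2}(1-e^{-t})$. I would write the numerator as
\[
(1+us)\bigl[\ln(1+u)-\ln(1+us)\bigr]+u(1-s)\ln(1+u).
\]
The second piece integrates to $u\ln(1+u)=te^{t}-t$. In the first piece I substitute $v=1-s$, so that $1+us=e^{t}(1-wv)$ with $w=1-e^{-t}$. This turns it into
\[
-e^{t}\int_{0}^{1}\frac{(1-wv)\ln(1-wv)}{v}\,dv
=-e^{t}\int_{0}^{1}\frac{\ln(1-wv)}{v}\,dv+e^{t}w\int_{0}^{1}\ln(1-wv)\,dv .
\]
The first integral equals $-\mathrm{Li}_{2}(w)$, giving $e^{t}\mathrm{Li}_{2}(1-e^{-t})$. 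The second is elementary and should produce the remaining terms in $t$ and $e^{t}$. After this, comparing coefficients of $t^{n}/n!$ completes the proof. The place where an error is most likely is the bookkeeping of these elementary terms and of the low-order values ($n=0,1$ and the constant terms), so I would verify the final identity numerically for $n=2,3$.
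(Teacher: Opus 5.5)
Your proof is correct, and the step you left open closes exactly. With $w=1-e^{-t}$ one has $\int_{0}^{1}\ln(1-wv)\,dv=\bigl(te^{-t}-w\bigr)/w$, so $e^{t}w\int_{0}^{1}\ln(1-wv)\,dv=1+t-e^{t}$. Adding $u\ln(1+u)=te^{t}-t$ gives precisely $1-e^{t}+te^{t}$. One slip in wording: the correction sequence is $n-1$, since $W_{n}=\mathbb{B}_{n}^{(2)}(1)-\mathbb{B}_{n}^{(2)}+n-1$, not ``$-1-n$''. Your displayed target is nevertheless the right one.

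Your first step is the paper's in compact form. The paper also differentiates (\ref{9}) in $x$ and combines the result with (\ref{9}) itself; your substitution $y=1/x$ does this in one stroke, amounting to $\sum_{j=1}^{n}\bigl(\binom{n}{j}-1\bigr)B_{j}x^{n-j}=n\,{}_{H}w_{n}^{(x)}-x\frac{d}{dx}\,{}_{H}w_{n}^{(x)}$. The routes split at the evaluation point.

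The paper takes $x=-1$, where $\frac{d}{dx}\binom{x}{k}\big|_{x=-1}=-(-1)^{k}H_{k}$. The derivative term then becomes the finite sum $\sum_{k}(-1)^{n-k}{n\brace k}k!(H_{k})^{2}$ of (\ref{14}). This needs the reflection formula for $B_{n}(-x)$, the value $\sum_{j}(-1)^{j}\bigl(\binom{n}{j}-1\bigr)B_{j}/j=n-H_{n}$, sign bookkeeping with $B_{1}$, and finally the generating function of $(H_{k})^{2}$ at $z=1-e^{-t}$.

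You take $x=1$, where the left side is already $\sum_{j}\bigl(\binom{n}{j}-1\bigr)B_{j}$ and ${}_{H}w_{n}^{(1)}=1$. You never expand the derivative as a finite sum. Instead you get its exponential generating function in closed form from $H_{k}=\int_{0}^{1}\frac{1-s^{k}}{1-s}\,ds$ and the binomial series. The dilogarithm then emerges from $\int_{0}^{1}\ln(1-wv)\,\frac{dv}{v}$ rather than from a harmonic-square generating function.

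Your argument is more self-contained. The paper's choice buys the explicit identity (\ref{14}), which it reuses for the congruences (\ref{c1}) and (\ref{c4}). Your side has a finite counterpart too. Since $\frac{d}{dx}\binom{x}{k}\big|_{x=1}=\frac{(-1)^{k}}{k(k-1)}$ for $k\geq2$, one gets $W_{n}=1+\sum_{k=2}^{n}(-1)^{k}{n\brace k}(k-2)!H_{k}$. Equivalently, $\sum_{j=0}^{n}B_{j}=B_{n}-n+1+\sum_{k=2}^{n}(-1)^{k}{n\brace k}(k-2)!H_{k}$.
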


\begin{proof}
The derivative of (\ref{9}) with respect to $x$ is
\begin{align}
n\sum_{j=1}^{n-1}\left(  \dbinom{n}{j}-1\right)  \frac{B_{j}}{j}x^{n-j-1}  &
-\sum_{j=1}^{n-1}\left(  \dbinom{n}{j}-1\right)  B_{j}x^{n-j-1}\nonumber\\
&  =\frac{d}{dx}\text{ }_{H}w_{n}^{\left(  x\right)  }-nH_{n}x^{n-1}.
\label{MC1}%
\end{align}
By (\ref{9}), we have
\begin{equation}
n\sum_{j=1}^{n}\left(  \dbinom{n}{j}-1\right)  \frac{B_{j}}{j}x^{n-j-1}%
=\frac{n}{x}\left(  \text{ }_{H}w_{n}^{\left(  x\right)  }-H_{n}x^{n}\right)
. \label{18}%
\end{equation}
The Bernoulli polynomials $B_{n}\left(  x\right)  $ are defined by
\[
\sum_{j=0}^{n}\dbinom{n}{j}B_{j}x^{n-j}=B_{n}\left(  x\right)  .
\]
Thus, we have
\begin{equation}
\sum_{j=1}^{n-1}\left(  \dbinom{n}{j}-1\right)  B_{j}x^{n-j-1}=\frac{1}%
{x}B_{n}\left(  x\right)  -\frac{1}{x}\sum_{j=1}^{n-1}B_{j}x^{n-j}.
\label{MC2}%
\end{equation}
Now, using the obvious relation
\[
\frac{d}{dx}\dbinom{x}{k}=\dbinom{x}{k}\sum_{j=0}^{k-1}\frac{1}{x-j}%
\]
in (\ref{17}), we find that
\begin{equation}
\frac{d}{dx}\text{ }_{H}w_{n}^{\left(  x\right)  }=\sum_{k=1}^{n}%
\genfrac{\{}{\}}{0pt}{0}{n}{k}%
\dbinom{x}{k}k!H_{k}\sum_{j=0}^{k-1}\frac{1}{x-j}. \label{MC3}%
\end{equation}
Inserting (\ref{18}), (\ref{MC2}), and (\ref{MC3}) in (\ref{MC1}) gives
\begin{align*}
\sum_{j=0}^{n}B_{j}x^{n-j}  &  =x\sum_{k=1}^{n}%
\genfrac{\{}{\}}{0pt}{0}{n}{k}%
\dbinom{x}{k}k!H_{k}\sum_{j=0}^{k-1}\frac{1}{x-j}-nH_{n}x^{n}\\
&  -n\sum_{j=1}^{n-1}\left(  \dbinom{n}{j}-1\right)  \frac{B_{j}}{j}%
x^{n-j}+B_{n}\left(  x\right)  .
\end{align*}
Setting $x=-1$ and using the relation $\left(  -1\right)  ^{n}B_{n}\left(
-x\right)  =B_{n}\left(  x\right)  +nx^{n-1}$ in the above equation yields to
\begin{equation}
\sum_{j=0}^{n}B_{j}=\sum_{k=1}^{n}\left(  -1\right)  ^{n-k}%
\genfrac{\{}{\}}{0pt}{0}{n}{k}%
k!\left(  H_{k}\right)  ^{2}+B_{n}\left(  1\right)  +n-n^{2}-1. \label{14}%
\end{equation}
To evaluate the first sum on the right-hand side of (\ref{14}), we use
(\ref{2sgf}) and find that%
\begin{equation}
\sum_{n=1}^{\infty}\frac{t^{n}}{n!}\sum_{k=1}^{n}\left(  -1\right)  ^{n-k}%
\genfrac{\{}{\}}{0pt}{0}{n}{k}%
k!\left(  H_{k}\right)  ^{2}=\sum_{k=1}^{\infty}\left(  H_{k}\right)
^{2}\left(  1-e^{-t}\right)  ^{k}. \label{13}%
\end{equation}
With the use of the generating function of the harmonic numbers
\[
\sum_{k=1}^{\infty}H_{k}t^{k}=-\frac{\ln\left(  1-x\right)  }{1-x},
\]
one can obtain that
\[
\sum_{k=1}^{\infty}\left(  H_{k}\right)  ^{2}t^{k}=\frac{\mathrm{Li}%
_{2}\left(  x\right)  }{1-x}+\frac{\ln^{2}\left(  1-x\right)  }{1-x}.
\]
Using this relation with (\ref{gf-pB}) in (\ref{13}), we arrive at%
\[
\sum_{k=1}^{n}\left(  -1\right)  ^{n-k}%
\genfrac{\{}{\}}{0pt}{0}{n}{k}%
k!\left(  H_{k}\right)  ^{2}=\mathbb{B}_{n}^{\left(  2\right)  }\left(
1\right)  -\mathbb{B}_{n}^{\left(  2\right)  }+n\left(  n-1\right)  .
\]
Combining these results in (\ref{14}) gives the desired formula.
\end{proof}

We conclude this section by presenting some congruences for cumulative sums of
the Bernoulli and Euler numbers. Here we call $E_{n}\left(  0\right)  =E_{n}$
is the $n$th Euler number.

\begin{corollary}
For an odd prime $p$, we have%

\begin{align}
\sum_{j=0}^{p}pB_{j}  &  \equiv-1\text{ }\left(  \operatorname{mod}p\right)
,\label{c1}\\
\sum_{j=0}^{p-3}B_{j}  &  \equiv-1\text{ }\left(  \operatorname{mod}p\right)
,\text{ \ \ }\left(  p\geq5\right)  ,\label{c4}\\
\sum_{j=0}^{p}E_{j}  &  \equiv\frac{3}{2}\text{ }\left(  \operatorname{mod}%
p\right)  ,\label{c2}\\
p\sum_{j=0}^{p}\frac{B_{j}}{\left(  p-j+1\right)  }  &  \equiv-1\text{
}\left(  \operatorname{mod}p\right)  . \label{c3}%
\end{align}

\end{corollary}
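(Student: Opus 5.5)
The plan is to get (\ref{c1}) from identity (\ref{14}) with $n=p$. I will then sharpen that computation to a congruence modulo $p^{2}$ to obtain (\ref{c4}), and derive (\ref{c2}) from the recurrence (\ref{16}) together with (\ref{15}). Finally, (\ref{c3}) should follow directly from the von Staudt--Clausen theorem. Besides von Staudt--Clausen, I will use these standard facts:
\begin{itemize}
\item ${p\brace k}\equiv0 \pmod p$ for $1<k<p$, with ${p\brace 1}={p\brace p}=1$;
\item Wilson's theorem;
\item Wolstenholme's congruence $H_{p-1}\equiv0 \pmod {p^{2}}$, valid for $p\geq5$;
\item Glaisher's congruence $pB_{p-1}\equiv p+(p-1)! \pmod{p^{2}}$;
\item Kummer's congruence.
\end{itemize}

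For (\ref{c1}) and (\ref{c4}), put $n=p$ in (\ref{14}), multiply by $p$, and work modulo $p^{2}$. The $k=1$ term contributes $p$. For $1<k<p$, the number $H_{k}$ is $p$-integral and $p{p\brace k}\equiv 0 \pmod{p^{2}}$, so these terms drop out. The $k=p$ term is
\[
p\cdot p!\,H_{p}^{2}=(p-1)!\,(pH_{p-1}+1)^{2}\equiv (p-1)! \pmod{p^{2}},
\]
using Wolstenholme. Since $B_{p}(1)=B_{p}=0$, the remaining part is $p(n-n^{2}-1)\equiv -p$. Hence
\[
p\sum_{j=0}^{p}B_{j}\equiv (p-1)! \pmod{p^{2}},
\]
and reducing modulo $p$ with Wilson's theorem gives (\ref{c1}). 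Because $B_{p-2}=B_{p}=0$, we have $\sum_{j=0}^{p}B_{j}=\sum_{j=0}^{p-3}B_{j}+B_{p-1}$. Subtracting $pB_{p-1}\equiv p+(p-1)! \pmod{p^{2}}$ then gives $p\sum_{j=0}^{p-3}B_{j}\equiv -p \pmod{p^{2}}$. Dividing by $p$ yields (\ref{c4}); this division is legitimate because the left-hand sum is $p$-integral for $p\geq5$. The main delicate point of the whole proof is this mod $p^{2}$ bookkeeping, in particular confirming that the $k=p$ term produces exactly $(p-1)!$ and nothing else modulo $p^{2}$.

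For (\ref{c2}), I use (\ref{15}) to write
\[
\sum_{j=0}^{p}E_{j}=2\sum_{j=1}^{p+1}\frac{B_{j}}{j}\left(1-2^{j}\right).
\]
Next, take $n=p$ in (\ref{16}). For $1\leq j\leq p-1$,
\[
\binom{p}{j}\frac{B_{j}}{j}=\frac{pB_{j}}{j^{2}}\binom{p-1}{j-1}.
\]
This is $\equiv0 \pmod p$ for $j<p-1$ by von Staudt--Clausen. For $j=p-1$ the factor $1-2^{p-1}\equiv0$ kills it. The $j=p$ term vanishes because $B_{p}=0$. So (\ref{16}) reduces to $\sum_{j=1}^{p-1}\frac{B_{j}}{j}(1-2^{j})\equiv1$, where the $j=p-1$ summand is $p$-integral by Fermat's little theorem. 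The $j=p$ term of the Euler sum is $0$. For the $j=p+1$ term, Kummer's congruence gives $B_{p+1}/(p+1)\equiv B_{2}/2=1/12$, and $1-2^{p+1}\equiv-3$, so this term is $\equiv -1/4$. Altogether $\sum_{j=0}^{p}E_{j}\equiv 2\left(1-\tfrac14\right)=\tfrac32$.

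For (\ref{c3}), I reduce $p\,B_{j}/(p-j+1)$ term by term modulo $p$.
\begin{itemize}
\item $j=0$: the term is $p/(p+1)\equiv0$.
\item $j=1$: the term is $pB_{1}/p=-1/2$.
\item $2\leq j\leq p-2$: $pB_{j}\equiv0$ and the denominator is a unit, so the term is $\equiv0$.
\item $j=p-1$: the term is $pB_{p-1}/2\equiv-1/2$.
\item $j=p$: the term is $pB_{p}=0$.
\end{itemize}
The sum is therefore $\equiv-1 \pmod p$.
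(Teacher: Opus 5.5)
Your treatment of (\ref{c1}), (\ref{c4}) and (\ref{c2}) follows the paper. You use (\ref{14}) at $n=p$ read modulo $p^{2}$ plus Glaisher's congruence, and (\ref{16}) combined with (\ref{15}) at $n=p$. In the mod $p^{2}$ step only $H_{p-1}\equiv0\pmod p$ is needed, as in the paper. Wolstenholme is more than necessary and, unlike the weaker fact, fails for $p=3$; this is harmless, since (\ref{c4}) assumes $p\ge5$ and (\ref{c1}) needs only precision mod $p$.

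There is one concrete hole, in (\ref{c2}). You evaluate the $j=p+1$ term with Kummer's congruence $B_{p+1}/(p+1)\equiv B_{2}/2\pmod p$, which requires $(p-1)\nmid 2$. For $p=3$ it is unavailable: $B_{4}/4=-1/120$ is not $3$-integral, and $1/12$ has no meaning modulo $3$. So $p=3$ must be checked separately, as the paper does: $\sum_{j=0}^{3}E_{j}=\frac34\equiv0\equiv\frac32\pmod 3$.

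For $p\ge5$ your Kummer step actually improves on the paper. The paper passes from $\sum_{j=0}^{p}E_{j}=2+E_{p}-p-\sum_{j=2}^{p-1}\binom{p}{j}E_{j-1}$ directly to $2-\frac12$. That silently uses $E_{p}\equiv-\frac12\pmod p$, and $E_{p}=2(1-2^{p+1})\frac{B_{p+1}}{p+1}\equiv2\cdot(-3)\cdot\frac{1}{12}$ is exactly the missing justification.

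For (\ref{c3}) you take a genuinely different route. The paper integrates (\ref{9}) over $[0,1]$ to obtain the exact identity (\ref{19}) in terms of Cauchy numbers of the first kind. It then isolates the $k=1,p-1,p$ terms and imports $c_{p}\equiv1$ and $pc_{p-1}\equiv1\pmod p$ from a congruence for degenerate Cauchy numbers.

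Your term-by-term reduction is correct and much shorter. The denominator $p-j+1$ is a unit modulo $p$ for every $j\neq1$, and the $j=1$ term is exactly $B_{1}=-\frac12$. Von Staudt--Clausen kills $2\le j\le p-2$ and gives $pB_{p-1}/2\equiv-\frac12$. The argument is self-contained and shows, as the paper's Remark does for (\ref{c1}), that (\ref{c3}) needs nothing from the main theorem.

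What the paper's detour buys is the link with Cauchy numbers. It also gives the refined form $p\sum_{j=0}^{p}\frac{B_{j}}{p-j+1}\equiv-\frac p2-c_{p}\pmod{p^{2}}$ recorded in its Remark, which your argument does not reach.
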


\begin{proof}
Let $n=p$ be an odd prime in (\ref{14}). Since $p\geq3$, we have $B_{p}\left(
1\right)  =B_{p}=0.$ Thus
\begin{align*}
\sum_{j=0}^{p}B_{j}  &  =\sum_{k=1}^{p}\left(  -1\right)  ^{p-k}%
\genfrac{\{}{\}}{0pt}{0}{p}{k}%
k!\left(  H_{k}\right)  ^{2}+B_{p}\left(  1\right)  +p-p^{2}-1\\
&  =p-p^{2}-1+%
\genfrac{\{}{\}}{0pt}{0}{p}{1}%
\left(  H_{1}\right)  ^{2}+%
\genfrac{\{}{\}}{0pt}{0}{p}{p}%
p!\left(  H_{p}\right)  ^{2}+\sum_{k=2}^{p-1}\left(  -1\right)  ^{p-k}%
\genfrac{\{}{\}}{0pt}{0}{p}{k}%
k!\left(  H_{k}\right)  ^{2}.
\end{align*}
Now, $%
\genfrac{\{}{\}}{0pt}{0}{p}{1}%
=%
\genfrac{\{}{\}}{0pt}{0}{p}{p}%
=1$, $H_{1}=1$, and
\[
p!\left(  H_{p}\right)  ^{2}=p!\left(  H_{p-1}\right)  ^{2}+2\left(
p-1\right)  !H_{p-1}+\frac{\left(  p-1\right)  !}{p},
\]
so
\[
\sum_{j=0}^{p}B_{j}=p-p^{2}+p!\left(  H_{p-1}\right)  ^{2}+2\left(
p-1\right)  !H_{p-1}+\frac{\left(  p-1\right)  !}{p}+\sum_{k=2}^{p-1}\left(
-1\right)  ^{p-k}%
\genfrac{\{}{\}}{0pt}{0}{p}{k}%
k!\left(  H_{k}\right)  ^{2},
\]
or equivalently%
\begin{align}
\sum_{j=0}^{p}pB_{j}  &  =p^{2}-p^{3}+p\left(  p!\right)  \left(
H_{p-1}\right)  ^{2}+2p! H_{p-1}+\left(  p-1\right)  !\nonumber\\
&  +p\sum_{k=2}^{p-1}\left(  -1\right)  ^{p-k}%
\genfrac{\{}{\}}{0pt}{0}{p}{k}%
k!\left(  H_{k}\right)  ^{2}. \label{MC4}%
\end{align}
Since $%
\genfrac{\{}{\}}{0pt}{0}{p}{k}%
\equiv0$ $\left(  \operatorname{mod}p\right)  $ for $k=2,3,\cdots,p-1$, we
find that
\[
\sum_{j=0}^{p}pB_{j}\equiv\left(  p-1\right)  !\equiv-1\text{ }\left(
\operatorname{mod}p\right)  ,
\]
which is the congruence given in (\ref{c1}).

Now, Babbage's theorem states that $H_{p-1}\equiv0$ $\left(
\operatorname{mod}p\right)  $, so (\ref{MC4}) can be written as
\[
\sum_{j=0}^{p}pB_{j}\equiv\left(  p-1\right)  !\text{ }\left(
\operatorname{mod}p^{2}\right)  .
\]
A famous congruence by Glaisher \cite{Glaisher1899} is
\[
\left(  p-1\right)  !\equiv-p+pB_{p-1}\text{ }\left(  \operatorname{mod}%
p^{2}\right)  .
\]
Thus%
\begin{align*}
-p+pB_{p-1}  &  \equiv\sum_{j=0}^{p}pB_{j}=pB_{0}+pB_{1}+pB_{p}+pB_{p-1}%
+pB_{p-2}+\sum_{j=2}^{p-3}pB_{j}\\
&  =p-\frac{p}{2}+pB_{p-1}+\sum_{j=2}^{p-3}pB_{j}\left(  \operatorname{mod}%
p^{2}\right)
\end{align*}
which implies that%
\[
\sum_{j=2}^{p-3}pB_{j}\equiv-\frac{3p}{2}\text{ }\left(  \operatorname{mod}%
p^{2}\right)
\]
for $p\geq3$, or equivalently%
\[
\sum_{j=2}^{p-3}B_{j}\equiv-\frac{3}{2}\text{ }\left(  \operatorname{mod}%
p\right)
\]
if $p\geq5.$ (\ref{c4}) now follows by using the values $B_{0}$ and $B_{1}$.

Inserting (\ref{15}) into (\ref{16}) yields to%
\[
\sum_{j=1}^{n}\left(  \binom{n}{j}+1\right)  \frac{E_{j-1}}{2}=1,
\]
or%
\[
\sum_{j=0}^{n-1}E_{j}=2-nE_{n}-E_{n-1}-\sum_{j=2}^{n-1}\binom{n}{j}E_{j-1}.
\]
Let $n=p$ be an odd prime. Then, we have%
\[
\sum_{j=0}^{p}E_{j}=2+E_{p}-p-\sum_{j=2}^{p-1}\binom{p}{j}E_{j-1},
\]
since $E_{0}=1$ and $E_{2m}=0$ for $m\geq1.$ Moreover, $2^{m}E_{m}\in%
\mathbb{Z}
,$ which means that $E_{m}$ does not contain primes $p$ in the denominator
when $p>2.$ Thus
\[
\binom{p}{j}E_{j-1}\equiv0\text{ }\left(  \operatorname{mod}p\right)
\]
for $j=1,2,\cdots,p-1.$ Therefore, we have
\begin{equation}
\sum_{j=0}^{p}E_{j}\equiv2-\frac{1}{2}=\frac{3}{2}\text{ }\left(
\operatorname{mod}p\right)  \label{22}%
\end{equation}
for $p\geq5.$ Note that
\[
\sum_{j=0}^{3}E_{j}=E_{0}+E_{1}+E_{2}+E_{3}=1-\frac{1}{2}+\frac{1}{4}=\frac
{3}{4},
\]
hence, (\ref{22}) is also true when $p=3$, that is, we have (\ref{c2}).

To prove (\ref{c3}), we first integrate both sides of (\ref{9}) with respect
to $x$ from $0$ to $1$ and use (\ref{12}) and the well-known formula%
\[
\sum_{j=0}^{n}\dbinom{n}{j}\frac{B_{j}}{n-j+1}=\int\limits_{0}^{1}B_{n}\left(
x\right)  dx=0,\text{ }n\geq2.
\]
We then reach at
\[
\sum_{j=0}^{n}\frac{B_{j}}{\left(  n-j+1\right)  }=1-\left(  n+1\right)
\int\limits_{0}^{1}\text{ }_{H}w_{n}^{\left(  x\right)  }dx.
\]
From the definition of the Cauchy numbers of the first kind $c_{n}$ (cf.
\cite{MSV})%
\[
\frac{c_{n}}{n!}=\int\limits_{0}^{1}\dbinom{x}{n}dx
\]
we arrive at%
\[
\sum_{j=0}^{n}\frac{B_{j}}{\left(  n-j+1\right)  }=1-\left(  n+1\right)
\sum_{k=1}^{n}%
\genfrac{\{}{\}}{0pt}{0}{n}{k}%
c_{k}H_{k}.
\]
We write $n=p,$ where $p$ is an odd prime in the equation above to obtain%
\begin{equation}
\sum_{j=0}^{p}\frac{B_{j}}{\left(  p-j+1\right)  }=1-\left(  p+1\right)
\sum_{k=1}^{p}%
\genfrac{\{}{\}}{0pt}{0}{p}{k}%
c_{k}H_{k}. \label{19}%
\end{equation}
Employing $%
\genfrac{\{}{\}}{0pt}{0}{p}{1}%
=%
\genfrac{\{}{\}}{0pt}{0}{p}{p}%
=1$ and $%
\genfrac{\{}{\}}{0pt}{0}{p}{p-1}%
=\dbinom{p}{2}$ and the explicit expression of Cauchy numbers of the first
kind (\cite{MSV})
\begin{equation}
c_{k}=\sum_{j=1}^{k}%
\genfrac{[}{]}{0pt}{0}{k}{j}%
\frac{\left(  -1\right)  ^{k-j}}{j+1}, \label{MC5}%
\end{equation}
we have%
\begin{align*}
\sum_{k=1}^{p}%
\genfrac{\{}{\}}{0pt}{0}{p}{k}%
c_{k}H_{k}  &  =\frac{1}{2}+c_{p}H_{p-1}+\frac{c_{p}}{p}+\frac{p\left(
p-1\right)  }{2}c_{p-1}H_{p-1}\\
&  +\sum_{k=2}^{p-2}\sum_{j=1}^{k}%
\genfrac{\{}{\}}{0pt}{0}{p}{k}%
\genfrac{[}{]}{0pt}{0}{k}{j}%
\frac{\left(  -1\right)  ^{k-j}}{j+1}H_{k},
\end{align*}
or equivalently%
\begin{align}
p\sum_{k=1}^{p}%
\genfrac{\{}{\}}{0pt}{0}{p}{k}%
c_{k}H_{k}  &  =\frac{p}{2}+pc_{p}H_{p-1}+c_{p}+\frac{p\left(  p-1\right)
}{2}pc_{p-1}H_{p-1}\nonumber\\
&  +p\sum_{k=2}^{p-2}\sum_{j=1}^{k}%
\genfrac{\{}{\}}{0pt}{0}{p}{k}%
\genfrac{[}{]}{0pt}{0}{k}{j}%
\frac{\left(  -1\right)  ^{k-j}}{j+1}H_{k.} \label{21}%
\end{align}
Let%
\[
A\left(  p\right)  =\sum_{k=2}^{p-2}\sum_{j=1}^{k}%
\genfrac{\{}{\}}{0pt}{0}{p}{k}%
\genfrac{[}{]}{0pt}{0}{k}{j}%
\frac{\left(  -1\right)  ^{k-j}}{j+1}H_{k.}.
\]
The product $\frac{1}{j+1}H_{k}$ does not involve $p$ in the denominator, so
$A\left(  p\right)  \equiv0\left(  \operatorname{mod}p\right)  $ since $%
\genfrac{\{}{\}}{0pt}{0}{p}{k}%
\equiv0\left(  \operatorname{mod}p\right)  $ for $k=2,3,\cdots,p-2.$
Therefore, from (\ref{19}) and (\ref{21}), we conclude that%
\begin{align}
p\sum_{j=0}^{p}\frac{B_{j}}{\left(  p-j+1\right)  }  &  =p-\frac{p^{2}}%
{2}-\frac{p}{2}-p^{2}c_{p}H_{p-1}-pc_{p}H_{p-1}-pc_{p}-c_{p}\nonumber\\
&  -\frac{p^{2}\left(  p-1\right)  }{2}pc_{p-1}H_{p-1}-\frac{p\left(
p-1\right)  }{2}pc_{p-1}H_{p-1}-p^{2}A\left(  p\right)  -pA\left(  p\right)  .
\label{23}%
\end{align}
Now, in Corollary 5.2 of \cite{CK}, it has been recorded that if
$\lambda\equiv0$ $\left(  \operatorname{mod}p\right)  ,$ then
\[
c_{p}\left(  \lambda\right)  \equiv1-\lambda+\left(  1-\lambda\right)  \left(
-\lambda\right)  \cdots\left(  2-\lambda-p\right)  \text{ }\left(
\operatorname{mod}p\right)
\]
and%
\[
pc_{p-1}\left(  \lambda\right)  \equiv1\text{ }\left(  \operatorname{mod}%
p\right)  ,
\]
where $c_{k}\left(  \lambda\right)  $ is the degenerate Cauchy number. Since
$\underset{\lambda\rightarrow0}{\lim}c_{k}\left(  \lambda\right)  =c_{k}$ , we
conclude that%
\[
c_{p}\equiv1\text{ }\left(  \operatorname{mod}p\right)  \text{ and }%
pc_{p-1}=\underset{\lambda\rightarrow0}{\lim}pc_{p-1}\left(  \lambda\right)
\equiv1\text{ }\left(  \operatorname{mod}p\right)
\]
Therefore, (\ref{23}) implies the congruence (\ref{c3}).
\end{proof}

\begin{remark}
(\ref{c1}) also follows from the von Staudt-Clausen theorem for the Bernoulli
numbers:
\[
pB_{2j}\equiv\left\{
\begin{array}
[c]{rl}%
0\left(  \operatorname{mod}p\right)  , & \text{if }\left(  p-1\right)
\nmid2j,\\
-1\left(  \operatorname{mod}p\right)  , & \text{if }\left(  p-1\right)  |2j.
\end{array}
\right.
\]
Indeed, we first write
\begin{align*}
\sum_{j=0}^{p}pB_{j}  &  =pB_{0}+pB_{1}+pB_{p}+pB_{p-1}+pB_{p-2}+\sum
_{j=1}^{\frac{p-3}{2}}pB_{2j}\\
&  =p-\frac{p}{2}+pB_{p-1}+\sum_{j=1}^{\frac{p-3}{2}}pB_{2j}.
\end{align*}
As j ranges from $1$ to $\frac{p-3}{2},$ $\left(  p-1\right)  $ does not
divide $2j.$ So $pB_{2j}\equiv0\left(  \operatorname{mod}p\right)  $ for
$j=1,2,\cdots,\frac{p-3}{2}.$ Moreover, $pB_{p-1}\equiv-1\left(
\operatorname{mod}p\right)  $ and hence we have
\[
\sum_{j=0}^{p}pB_{j}\equiv-1\text{ }\left(  \operatorname{mod}p\right)
\]
for an odd prime $p.$

We also note that, from (\ref{23}), the congruence (\ref{c3}) is equivalent
to
\[
p\sum_{j=0}^{p}\frac{B_{j}}{\left(  p-j+1\right)  }=-\frac{p}{2}-c_{p}\text{
}\left(  \operatorname{mod}p^{2}\right)  .
\]

\end{remark}

\section{Proof of the Main Theorem}

\setcounter{theorem}{0} \setcounter{equation}{0}

Let
\[
\sum_{k=j}^{n}\left(  -1\right)  ^{k-j}%
\genfrac{\{}{\}}{0pt}{0}{n}{k}%
\genfrac{[}{]}{0pt}{0}{k}{j}%
H_{k}=\mathcal{B}_{n,j}.
\]
Using the well known recurrence relation for the Stirling numbers of the
second kind \cite[Eq. (9.1)]{QG}
\[%
\genfrac{\{}{\}}{0pt}{0}{n+1}{k}%
=%
\genfrac{\{}{\}}{0pt}{0}{n}{k}%
k+%
\genfrac{\{}{\}}{0pt}{0}{n}{k-1}%
,
\]
we obtain that
\begin{align}
\mathcal{B}_{n+1,j}  &  =\sum_{k=j}^{n+1}\left(  -1\right)  ^{k-j}%
\genfrac{\{}{\}}{0pt}{0}{n}{k}%
\genfrac{[}{]}{0pt}{0}{k}{j}%
kH_{k}\nonumber\\
&  +\sum_{k=j}^{n+1}\left(  -1\right)  ^{k-j}%
\genfrac{\{}{\}}{0pt}{0}{n}{k-1}%
\genfrac{[}{]}{0pt}{0}{k}{j}%
H_{k}. \label{3}%
\end{align}
Applying the well-known recurrence relation of the Stirling numbers of the
first kind \cite[Eq. (12.3)]{QG}
\[%
\genfrac{[}{]}{0pt}{0}{k+1}{j}%
=%
\genfrac{[}{]}{0pt}{0}{k}{j}%
k+%
\genfrac{[}{]}{0pt}{0}{k}{j-1}%
,
\]
and the fact that $%
\genfrac{\{}{\}}{0pt}{0}{n}{k}%
=0$ for $k>n,$ (\ref{3}) becomes
\begin{align*}
\mathcal{B}_{n+1,j}  &  =\sum_{k=j}^{n}\left(  -1\right)  ^{k-j}%
\genfrac{\{}{\}}{0pt}{0}{n}{k}%
\genfrac{[}{]}{0pt}{0}{k+1}{j}%
H_{k}-\sum_{k=j}^{n}\left(  -1\right)  ^{k-j}%
\genfrac{\{}{\}}{0pt}{0}{n}{k}%
\genfrac{[}{]}{0pt}{0}{k}{j-1}%
H_{k}\\
&  -\sum_{k=j-1}^{n}\left(  -1\right)  ^{k-j}%
\genfrac{\{}{\}}{0pt}{0}{n}{k}%
\genfrac{[}{]}{0pt}{0}{k+1}{j}%
H_{k}-\sum_{k=j-1}^{n}\left(  -1\right)  ^{k-j}%
\genfrac{\{}{\}}{0pt}{0}{n}{k}%
\genfrac{[}{]}{0pt}{0}{k+1}{j}%
\frac{1}{k+1}.
\end{align*}
Employing the identity (\ref{4}) in the form
\[
\sum_{k=j-1}^{n-1}\frac{\left(  -1\right)  ^{k+1-j}}{k+1}%
\genfrac{\{}{\}}{0pt}{0}{n-1}{k}%
\genfrac{[}{]}{0pt}{0}{k+1}{j}%
=\dbinom{n-1}{j}\frac{B_{n-j}}{n-j}\,\ \left(  1\leq j\leq n\right)  ,
\]
we find that
\[
\mathcal{B}_{n+1,j}=\mathcal{B}_{n,j-1}+\dbinom{n}{j}\frac{B_{n+1-j}}{n+1-j}.
\]
From this reduction formula, we see that
\[
\mathcal{B}_{n+1,j}=\mathcal{B}_{n-1,j-2}+\left(  \dbinom{n}{j}+\dbinom
{n-1}{j-1}\right)  \frac{B_{n+1-j}}{n+1-j}.
\]
Consecutive $j-2$ times applications yield to
\begin{equation}
\mathcal{B}_{n+1,j}=\mathcal{B}_{n-j,0}+\frac{B_{n+1-j}}{n+1-j}\sum
_{k=0}^{j-1}\dbinom{n-k}{j-k}. \label{2}%
\end{equation}
Here, we have
\begin{equation}
\sum_{k=0}^{j-1}\dbinom{n-k}{j-k}=\dbinom{n+1}{j}-1, \label{8}%
\end{equation}
upon the use of \cite[p. 7, Eq. (1.49)]{G}%
\[
\sum_{k=0}^{n}\dbinom{x+k}{k}=\dbinom{x+n+1}{n}.
\]
Substituting (\ref{8}) into (\ref{2}), with the use of $\mathcal{B}_{n-j,0}%
=0$, we arrive at%
\[
\mathcal{B}_{n+1,j}=\left(  \dbinom{n+1}{j}-1\right)  \frac{B_{n+1-j}}%
{n+1-j},
\]
which is the desired result.

\end{document}